\theoremstyle{plain}
\theoremstyle{plain}
\newtheorem{theorem}{Theorem}[section]
\newtheorem*{theorem*}{Theorem}
\newtheorem{lemma}[theorem]{Lemma}
\newtheorem{proposition}[theorem]{Proposition}
\theoremstyle{definition}
\newtheorem{definition}[theorem]{Definition}
\newtheorem{remark}[theorem]{Remark}
\newtheorem{question}[theorem]{Question}
  \newcommand{\C}{\ensuremath{\mathbb{C}}}%
  \newcommand{\R}{\ensuremath{\mathbb{R}}}%
  \newcommand{\N}{\ensuremath{\mathbb{N}}}%
  \newcommand{\Z}{\ensuremath{\mathbb{Z}}}%
\begin{document}
\title[Invariant means for the wobbling group]{Invariant means for the wobbling group}
\author[]{Kate Juschenko, Mikael de la Salle}
\address{Kate Juschenko: Northwestern University \& CNRS, Universit\'e Lyon 1}
\address{Mikael de la Salle: CNRS, ENS de Lyon.
}
\thanks{The work of M. de la Salle was partially supported by ANR grants OSQPI and NEUMANN}
\begin{abstract}
Given a metric space $(X,d)$, the wobbling group of $X$ is the group of bijections $g:X\rightarrow X$ satisfying $\sup\limits_{x\in X} d(g(x),x)<\infty$. We study algebraic and analytic properties of $W(X)$ in relation with the metric space structure of $X$, such as amenability of the action of the lamplighter group $ \bigoplus_{X} \Z/2\Z \rtimes W(X)$ on $\bigoplus_{X} \Z/2\Z$ and property~(T).
\end{abstract}
\maketitle

\section{Introduction}
In this paper we deal with \emph{amenable actions} of discrete groups. In our setting  an action of a group $G$ on a set $X$ is called \emph{amenable} if there is an $G$-invariant mean on $X$. A linear map $\mu$ on $\ell_\infty(X)$ is a \emph{mean} on $X$ if it is unital and $\|\mu\|=1$. A group $G$ is amenable if and only if its action on itself by left translation is amenable, in this case all actions of $G$ are amenable. Thus the question of determining whether an action is amenable is interesting in the case when $G$ is not (known to be) amenable.

Let $G$ be a discrete group acting transitively on a set $X$. The abelian group $\bigoplus_{X} \Z/2\Z$ carries an action of itself by translation, and an action of $G$ by permutation of the basis, which gives rise to an action of the semidirect product (also called permutational wreath product, or lamplighter group) $\bigoplus_{X} \Z/2\Z \rtimes G$. We will be interested in particular cases of the following general question~:

\begin{question}\label{MQ1} {\it Is the action of $\bigoplus_X \Z/2\Z  \rtimes G$ on $\bigoplus_X \Z/2\Z$ amenable?}
\end{question}

An easy necessary condition for \ref{MQ1} is that the action of $G$ on $X$ is amenable. We observe that this is not sufficient, see Proposition \ref{NCQ1}.

In \cite{JNS}, Nekrashevych and the authors showed that Question \ref{MQ1} has a positive answer if the Schreier graph of the action of $G$ on $X$ is recurrent. However a characterization of the actions for which the answer is positive is still open.

In this note all metric spaces will be discrete. A metric space $X$ has bounded geometry if for every $R>0$, the balls of radius $R$ have bounded cardinality. We will mainly be interested in a special case of Question \ref{MQ1} when $(X,d)$ is a metric space with bounded geometry and $G$ is a group of bijections $g$ of $X$ with bounded displacement, \emph{i.e.} with the property that $|g|_w<\infty$, where
\begin{equation}\label{eq=wobbling} |g|_w:=\sup \{d(x,g(x)):x\in X\}.\end{equation} Following \cite{deuber} (see also \cite{CGH}) we will call the group of all such bijections of $X$ the \emph{wobbling group} of $X$ and denote it by $W(X)$. In \cite{lacz}, \cite[Remark $0.5.C_1''$]{gromov} and \cite{deuber} the wobblings were introduced as tools to prove non-amenability results. In \cite{JM}, they were used to prove amenability results (see below for details). When $X$ is a Cayley graph of a finitely generated group $\Gamma$ with word metric we will denote the wobbling group of $X$ shortly by $W(\Gamma)$. The group $W(\Gamma)$ does not depend on a finite generating set of $\Gamma$ and it coincides with the group of piecewise translations of $\Gamma$. As a special case of Question \ref{MQ1} we can ask~:

\begin{question}\label{main_question} {\it Is the action of $\bigoplus_X \Z/2\Z \rtimes W(X)$ on $\bigoplus_X \Z/2\Z$ amenable?}
\end{question}

The motivation for the question above is based on the recent result of the first named author and N. Monod, \cite{JM}, where the authors show that {\it the full topological group of Cantor minimal system} is amenable, which was previously conjectured by Grigorchuk and Medynets in \cite{GM3}. The combination of this result with the result of H. Matui, \cite{Mat}, produces the first examples of infinite simple finitely generated amenable groups. The technical core of \cite{JM} is to show that the Question \ref{main_question} has a positive answer for the particular case $X=\mathbb{Z}$.

Our goal would be to give a necessary and sufficient condition on $(X,d)$ for Question \ref{main_question} to have a positive answer. Theorem \ref{thm=summary_of_paper} summarizes our partial results in this direction.
\begin{definition}\label{def_transience} Let $(X,d)$ be a metric space with bounded geometry and fix $x_0 \in X$. $(X,d)$ is called transient if there is $R>0$ such that the random walk starting at $x_0$ and jumping from a point $x$ uniformly to $B(x,R)$ is transient. Otherwise it is called recurrent.
\end{definition}
This notion does not depend on $x_0$, and when $(X,d)$ is a connected graph with graph distance this notion is equivalent to the transience of the usual random walk on this graph (Proposition \ref{properties_transience}).
\begin{theorem}\label{thm=summary_of_paper} Let $(X,d)$ be a metric space with bounded geometry.
\begin{itemize}
\item If $(X,d)$ is recurrent, then the action of $\bigoplus_X \Z/2\Z \rtimes W(X)$ on $\bigoplus_X \Z/2\Z$ is amenable. This includes $X=\Z,\Z^2$ or more generally a metric space $(X,d)$ with bounded geometry that embeds coarsely in $\Z^2$.
\item If $X$ contains a Lipschitz and injective image of the infinite binary tree, then the action of $\bigoplus_X \Z/2\Z \rtimes W(X)$ on $\bigoplus_X \Z/2\Z$ is not amenable.
\end{itemize}
\end{theorem}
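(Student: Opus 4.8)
\emph{The recurrent case.} Here the plan is to reduce to the theorem of \cite{JNS} quoted above, which gives amenability of the action on $\mathcal{P}_f(X)$ as soon as the Schreier graph of the action $G\curvearrowright X$ is recurrent. I would fix $R>0$ as in Definition \ref{def_transience} and take as generating set of $W(X)$ the wobbling elements of displacement at most $R$; since $X$ has bounded geometry this set meets each point finitely often, so the associated Schreier graph $\mathcal S$ of $W(X)\curvearrowright X$ is locally finite, and because the transposition of any two points at distance $\le R$ already lies in $W(X)$, $\mathcal S$ contains — and is quasi-isometric to — the graph on $X$ joining points at distance $\le R$ (treating coarse-connected components separately if $X$ is not coarsely connected). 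The simple random walk on $\mathcal S$ is then comparable to the ball-walk of Definition \ref{def_transience}, so recurrence of $(X,d)$ coincides with recurrence of $\mathcal S$ via Proposition \ref{properties_transience}, and \cite{JNS} applies. For the listed examples, $\Z$ and $\Z^2$ are recurrent by Pólya's theorem, and since recurrence is preserved under coarse embeddings of bounded geometry spaces (Proposition \ref{properties_transience}), any bounded geometry $X$ embedding coarsely in $\Z^2$ is recurrent and falls under this case. The one genuine point to verify is that the walk of Definition \ref{def_transience} and the walk on $\mathcal S$ have the same recurrence behaviour, which is a standard comparison of Dirichlet forms.

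\emph{The tree case.} For the second part I would argue by contraposition, using that amenability of $W(X)\ltimes\mathcal{P}_f(X)\curvearrowright\mathcal{P}_f(X)$ is the \emph{extensive amenability} of $W(X)\curvearrowright X$, a property that passes to subgroups and to invariant subsets. Let $\varphi\colon T\to X$ be a Lipschitz injection of the binary tree with constant $L$, and set $Y=\varphi(T)$. Conjugation by $\varphi$ carries a bijection of $T$ of bounded tree-displacement to a bijection of $Y$ of $X$-displacement at most $L$ times that bound; extending by the identity on $X\setminus Y$ produces an injective homomorphism $W(T)\to W(X)$ whose image $H$ preserves $Y$, is compatible with the inclusion $\mathcal{P}_f(Y)\subseteq\mathcal{P}_f(X)$, and realizes $W(T)\curvearrowright T$ as conjugate to $H\curvearrowright Y$. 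Thus, were $W(X)\curvearrowright X$ extensively amenable, restricting the group to $H$ and the space to the invariant subset $Y$ would force $W(T)\curvearrowright T$ to be extensively amenable too, and in particular the plain action $W(T)\curvearrowright T$ would be amenable, by the easy necessary condition already invoked for Question \ref{MQ1}.

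It therefore suffices to show that $W(T)\curvearrowright T$ is \emph{not} amenable, and this is where the non-amenability of the tree as a graph enters. Since the binary tree is bipartite of maximal degree $3$, its edge set splits into three matchings $M_1,M_2,M_3$; let $\sigma_i\in W(T)$ be the involution swapping the two endpoints of every edge of $M_i$ and fixing all other vertices, so $|\sigma_i|_w\le 1$. For finite $F\subseteq T$, every edge of the edge-boundary $\partial F$ lies in exactly one $M_i$ and contributes both of its endpoints to $\sigma_iF\,\Delta\,F$, whence $\sum_{i=1}^3|\sigma_iF\,\Delta\,F|\ge|\partial F|\ge h(T)\,|F|$, where $h(T)>0$ is the Cheeger constant of the tree. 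This uniform lower bound contradicts the Følner condition for $\{\sigma_1,\sigma_2,\sigma_3\}$, so $W(T)\curvearrowright T$ has no invariant mean, and combined with the previous paragraph this yields non-amenability of the action on $\mathcal{P}_f(X)$. I expect the main obstacle to be the structural bookkeeping in this case: making precise (and either citing or reproving) the inheritance of extensive amenability under passage to a subgroup and to an invariant subset, and verifying that the homomorphism $W(T)\to W(X)$ together with $\mathcal{P}_f(Y)\subseteq\mathcal{P}_f(X)$ genuinely exhibits $W(T)\ltimes\mathcal{P}_f(T)\curvearrowright\mathcal{P}_f(T)$ as a subaction. The isoperimetric estimate itself, and the reduction in the recurrent case, are comparatively routine once these facts are in place.
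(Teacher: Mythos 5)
Your overall strategy coincides with the paper's in the recurrent case and diverges, in an interesting way, in the tree case.

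For the first bullet the paper argues essentially as you do, but it first reduces to finitely generated subgroups $G=\langle S\rangle \le W(X)$ (an invariant mean for all of $W(X)\ltimes \mathcal P_f(X)$ exists as soon as one exists for every finitely generated subgroup, by weak-$*$ compactness of the set of means), and then observes that the Schreier graph of $G$ on the orbit of $x_0$ maps Lipschitzly and injectively into $(X,d)$, so that recurrence of $(X,d)$ passes to the Schreier graph by (the contrapositive of) Lemma \ref{lemma=invariance_under_coarse_embedding}; the result of \cite{JNS} then applies. You should follow this route rather than take the uncountable generating set of all displacement-$\le R$ wobblings: the quoted theorem of \cite{JNS} concerns Schreier graphs with respect to finite generating sets, and ``the simple random walk on $\mathcal S$'' is not defined for an infinite (multi)generating set. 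The coarse-embedding examples are handled exactly as you say, again via Lemma \ref{lemma=invariance_under_coarse_embedding}. For the second bullet the paper also relies on your monotonicity principle (its Lemma \ref{monotonicity}), but then routes through the free group: $T$ contains a Lipschitz injective copy of $F_2$, and a $W(F_2)\ltimes\mathcal P_f(F_2)$-invariant mean would force $F_2$ to be amenable by Proposition \ref{NCQ1} and Lemma \ref{no_mean_when_nonamenable}. Your alternative endgame --- splitting the edges of $T$ into three matchings by K\"onig's theorem and using the positive Cheeger constant of the tree to defeat the F{\o}lner criterion for $W(T)\curvearrowright T$ --- is correct and somewhat more self-contained (no need to embed $F_2$ into $T$), at the price of invoking the F{\o}lner characterization of amenable actions.

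The step you flag as the ``main obstacle'' is indeed where the content lies, and the paper resolves it with a specific device you should not omit: the push-forward of a mean on $\mathcal P_f(X)$ under $A\mapsto \varphi^{-1}(A)$ could a priori concentrate on $\{\emptyset\}$ (the original mean might give full weight to sets disjoint from $\varphi(T)$). The fix is the reformulation of \cite[Lemma 3.1]{JM}: amenability of the action on $\mathcal P_f(X)$ is equivalent to the existence of an invariant mean giving full weight to the collection of sets containing a fixed basepoint. Choosing the basepoint inside $\varphi(T)$, the push-forward mean gives full weight to sets containing its preimage, hence is nonzero and witnesses extensive amenability for $T$. With that inserted, and the recurrent case rephrased in terms of finitely generated subgroups, your argument is complete.
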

The sufficient condition in terms of the random walk uses \cite{JNS} and is a necessary and sufficient condition for a stronger condition to hold, see Remark \ref{rem=CNS}. We also take the opportunity in Remark \ref{rem=taka} to present an alternative proof, due to Narutaka Ozawa, of \cite[Theorem 1.2]{JNS}. 

By Remark \ref{groups_coarsely_containing_tree}, Question \ref{main_question} has a negative answer for many Cayley graphs of groups with exponential growth. By  \cite[Theorem 3.24]{Woess}, the first criterion applies to a finitely generated group $X=\Gamma$ if and only if $\Gamma$ is virtually $\{0\}$, $\Z$, $\Z^2$. The case when $X=\Z^d$, $d\geq 3$ remains an intriguing open question.

By \cite{Ozawa} a positive answer to Question \ref{main_question} would follow from the weak amenability of $\bigoplus_X \Z/2\Z \rtimes W(X)$. We could not follow this approach, but this led us to wonder whether $W(X)$ can contain property (T) subgroups.

As one may expect there is a strong relation between group structure of $W(X)$ and metric space structure of $X$. We show that if $X$ is of uniform subexponential growth, then $W(X)$ does not contain infinite property $(T)$ subgroups, see Theorem \ref{thm=WX_contains_no_T_subgroup}. On the other hand, an example of R. Tessera, see Theorem \ref{Rom} shows that there exists a solvable group $\Gamma$ such that $W(\Gamma)$ contains $SL_3(\mathbb{Z})$.

The paper is organized as follows. In Section \ref{RW} we study the notion of transience for metric spaces with bounded geometry and prove the first half of Theorem \ref{thm=summary_of_paper}. In Section \ref{section=neg} we prove the second half of Theorem \ref{thm=summary_of_paper} (Proposition \ref{X_contains_tree}), and in a last section we study when $W(X)$ contains property $(T)$ groups.
\section*{Acknowledgements} We thank R.~Tessera for interesting conversations and for allowing us to include Theorem \ref{Rom}. We also thank Y.~de Cornulier for useful remarks on the presentation of Section \ref{section_on_wobbling_group}. We are grateful to R.~Grigorchuk for providing us with numerous comments and suggestions that improved the paper a lot. In a preliminary version of this paper our results were not stated in terms of random walks. U.~Bader, B.~Hua and A.~Valette suggested a connection with recurrence of random walks and B. Hua pointed out \cite[Theorem 3.24]{Woess} to us. We thank them, and especially A.~Valette, for this suggestion and for many other valuable comments. Finally we thank Narutaka Ozawa for his Remark \ref{rem=taka}.

\section{Recurrent random walks and amenability}\label{RW}

Here we prove the following fact on recurrent random walks.
\begin{proposition}\label{prop=link_recurrence} Let $(X,d)$ be a metric space with bounded geometry and $x_0 \in X$. Then $(X,d)$ is recurrent if and only if for every finitely supported symmetric probability measure $\mu$ on $W(X)$, the random walk on $X$ starting from $x_0$ and jumping from $x$ to $g\cdot x$ according to the measure $\mu$ is recurrent.
\end{proposition}

Before we prove Proposition \ref{prop=link_recurrence}, we state some properties of transience for metric spaces.
\begin{proposition}\label{properties_transience} 
Let $(X,d)$ be a metric space with bounded geometry and $x_0 \in X$. Let $R>0$ such that the random walk starting at $x_0$ and jumping from a point $x$ uniformly to $B(x,R)$ is transient. Then for every $R'>R$ the random walk starting at $x_0$ and jumping from a point $x$ uniformly to $B(x,R)$ is also transient.

The notion of transience given by Definition \ref{def_transience} is independent of $x_0$.

In the case $(X,d)$ is a connected graph with bounded geometry, the transience in the sense of Definition \ref{def_transience} is equivalent to the transience of the usual random walk on the graph.
\end{proposition}
\begin{proof}
Consider $(V,E)$ the connected component of $x_0$ in the graph structure on $X$ where there is an edge between two points of $X$ at distance at most $R'$. Then the random walk starting at $x_0$ and jumping from $x$ uniformly to $B(x,R)$ is a reversible random walk on $(V,E)$ with constant conductance and bounded range, so that by \cite[Theorem 3.2]{Woess} its transience implies the transience of the simple random walk on $(V,E)$. This proves the first point.

Let $x_0,x_1 \in X$. If there is $R$ such that the random walk starting at $x_0$ and jumping from a point $x$ uniformly to $B(x,R)$ is transient, by the first point we can assume that $R>d(x,x_0)$, so that the same random walk starting at $x_1$ is also transient. This proves the second point.

Assume that $(X,E)$ is a connected graph with bounded geometry, take $x_0 \in X$ and $R\geq 1$. Let $(X,E')$ be the graph structure on $X$ in which there is an edge between two points of $X$ at distance at most $R$. The formal identity between $(X,d)$ and $(X,d')$ is a bilipschitz bijection, so that by \cite[Theorem 3.10]{Woess} the random walk on $(X,E)$ is transient if and only if the random walk on $(X,E')$ is transient.
\end{proof}

\begin{proof}[Proof of Proposition \ref{prop=link_recurrence}]
  Assume that $(X,d)$ is recurrent. Take $\mu$ as in the Proposition. Remember the notation \eqref{eq=wobbling} and pick $R>\max_{g \in \textrm{supp}(\mu)} |g|_w$. Since $(X,d)$ is recurrent, the random walk starting at $x_0$ and jumping from a point $x$ uniformly to $B(x,R)$ is recurrent. By \cite[Theorem 3.2]{Woess} the random walk starting at $x_0$ and jumping from a point $x$ to $g\cdot x$ uniformly according to $\mu$ is therefore also recurrent.

Reciprocally, assume that $(X,d)$ is transient, and take $R>0$ as in the definition. We will construct a finite symmetric subset $S$ of $W(X)$ such for that every pair of points $x,y \in X$ at distance less than $R$ there is $g \in S$ such that $g x = y$. By \cite[Theorem 3.2]{Woess} this will imply the transience of the simple random walk on the connected component of $x_0$ in the graph structure on $X$ in which there is an edge between $x$ and $gx$ for every $x \in X, g \in S$. In other words if $\mu$ is the uniform probability measure on $S$, the random walk on $X$ starting from $x_0$ and jumping from $x$ to $g\cdot x$ according to $\mu$ is transient. Here is the construction of $S$. Define a graph structure on $X$ by putting
an edge between $x$ and $x'$ if $d(x,x')\leq R$. We obtain a (not
necessarily connected) graph $(X,E)$ with bounded geometry on which the random walk starting from $x_0$ is transient. Denote by $d_E$ the
associated graph distance. Take a finite collection $(X_i)_{i \leq l}$ of subsets of $X$
such that $\cup_i X_i = X$ and $d_E(x,y) \geq 3$ for all $x,y \in X_i$
and all $i$. Take $k \in \N$, and for every $x \in X$ take a sequence
$y_1(x),\dots,y_k(x)$ that covers all neighbours of $x$ in
$(X,E)$. The existence of such collection $(X_i)$ and such $k$ follows
from the bounded geometry assumption. Then for every $i\leq l$ and
every $j \leq k$, consider the element $s_{i,j}$ of $W(X)$ that
permutes $x$ and $y_j(x)$ for every $x \in X_i$ and acts as the
identity on the rest of $X$. Then $S= \{s_{i,j},i \leq l, j\leq k\}$
works. Indeed by construction for every neighbours $(x,x') \in (X,E)$
there is at least one (in fact two) element of $S$ that permutes $x$ and $x'$.
\end{proof}

\begin{proof}[Proof of the first part of Theorem \ref{thm=summary_of_paper}] Assume that $(X,d)$ is recurrent. Let $G$ be a finitely generated subgroup of $W(X)$. By Proposition \ref{prop=link_recurrence} and \cite[Theorem 1.2]{JNS}, the action of $\bigoplus_{G x_0} \Z/2\Z \rtimes G$ on $\bigoplus_{G x_0} \Z/2\Z$ is amenable. This implies that the action of $\bigoplus_{X} \Z/2\Z \rtimes G$ on $\bigoplus_{X} \Z/2\Z$ is amenable~: if $m$ is a $\bigoplus_{G x_0} \Z/2\Z \rtimes G$-invariant mean on $\ell_\infty(\bigoplus_{G x_0} \Z/2\Z)$, then $f \in \ell_\infty(\bigoplus_{X} \Z/2\Z) \mapsto m(f\left|_{\bigoplus_{G x_0} \Z/2\Z}\right.)$ is a $\bigoplus_{X} \Z/2\Z \rtimes G$-invariant mean. This proves that the action of every finitely subgroup of $\bigoplus_{X} \Z/2\Z \rtimes W(X)$ on $\bigoplus_{X} \Z/2\Z$ is amenable, and concludes the proof. 
\end{proof}
\begin{remark}\label{rem=CNS} As is well-known, the action of a group on a set $Y$ is amenable if and only if there is a net $f_\alpha$ of unit vectors in $\ell_2(Y)$ such that $\lim_\alpha \| g \cdot f_\alpha -f_\alpha \|=1$ for all $g \in G$. In the special case of $G = \bigoplus_X \Z/2\Z \rtimes W(X)$ acting on $Y =\bigoplus_X \Z/2\Z$, Proposition \ref{prop=link_recurrence} and \cite[Theorem 1.2]{JNS} show that the recurrence of $(X,d)$ is equivalent to the existence of such a net $f_\alpha$ with the additional property that $f_\alpha \in \ell^2(\bigoplus_X \Z/2\Z)$ is of the form $f_\alpha(\omega) = \prod_{x \in X} f_{\alpha,x}(\omega_x)$ for functions $f_{\alpha,x} \colon \Z/2\Z \to \C$.
\end{remark}
\begin{remark}\label{rem=taka} By the theory of electrical networks (see \cite{JNS} for details), the Schreier graphs of a transitive action of a finitely generated group $G$ on $X$ carry a recurrent random walk if and only there exists a sequence of finitely supported function $a_n:X\rightarrow [0,1]$ that satisfy~:
\begin{enumerate}
\item $a_n(x_0)=1$,
\item $\lim_n \|g\cdot a_n-a_n\|_{\ell_2(X)}<\varepsilon$ for every $g\in G$.
\end{enumerate}
It was proved in \cite{JNS} (and used above) that this implies a positive answer to Question \ref{MQ1}. We record here a slightly different proof, due to Narutaka Ozawa (personal communication). Let $\mathcal P_f(X)$ denote the set of all finite subsets of $X$, that we identify with $\oplus_X \Z/2\Z$. For $a_n$ as above, let $\xi_n(B)=\prod\limits_{x\in B} a_n(x)$ for $B\in \mathcal{P}_f(X)$ and $\xi_n(\emptyset)=1$. Then $\xi_n \in \ell^2(\mathcal P_f(X))$ is $\{x_0\}$-invariant and for $g \in G$
\[ \langle g\xi_n,\xi_n \rangle = \sum_{B \in \mathcal P_f(X)} \prod_{x \in B} a_n(x) a_n(gx) = \prod_{x \in X} (1+a_n(x)a_n(gx))\]
by distributivity. In particular for the identity element,
\[ \langle \xi_n,\xi_n \rangle  = \prod_{x \in X} (1+a_n(x)^2) = \prod_{x \in X} (1+a_n(gx)^2) = \prod_{x \in X} \sqrt{1+a_n(x)^2} \sqrt{1+a_n(gx)^2}\]
by reordering the terms. Therefore
\begin{multline*}\log{\frac{\langle \xi_n,\xi_n \rangle}{\langle g\xi_n,\xi_n \rangle}}=\log{\prod\limits_{x\in X}} \frac{\sqrt{1+a_n(x)^2} \sqrt{1+a_n(gx)^2}}{1+a_n(x)a_n(gx)}\\ \leq \sum_{x \in X} \frac{(a_n(x) - a_n(gx))^2}{2(1+a_n(x)a_n(gx))^2}\leq \frac{1}{2}\|a_n-g\cdot a_n\|_{\ell^2(X)}^2,\end{multline*}
which goes to zero as $n \to \infty$. The first inequality is the basic inequality $\ln(\sqrt{1+A}) \leq \frac 1 2 A$ for 
\[ A = \frac{(1+a_n(x)^2)(1+a_n(gx)^2)}{(1+a_n(x)a_n(gx))^2} -1 = \frac{(a_n(x) - a_n(gx))^2}{(1+a_n(x)a_n(gx))^2}.\]
 Any weak-$*$ cluster point in $\ell_\infty(\mathcal P_f(X))^*$ of the sequence $|\xi_n|^2/\|\xi_n\|^2$ will therefore be a $G\ltimes \mathcal{P}_f(X)$-invariant mean. This construction of $\xi_n$ should be compared to the one in \cite{JNS}, which was defined (through Fourier transform) as $\xi_n(B) = \prod_{x \in B} \sin(\frac{\pi}{4} a_n(x)) \times \prod_{x \notin B} \cos(\frac{\pi}{4} a_n(x))$.
\end{remark}

\section{Negative answer to the Question \ref{main_question}}\label{section=neg}
\label{section_on_wobbling_group}

Let $G$ be a group acting on $X$. We start by recording the following result. The second assertion follows from results proved later, but is not used in the rest of the paper.
\begin{proposition}\label{NCQ1} If the action of $\bigoplus_X \Z/2\Z \rtimes G$ on $\bigoplus_X \Z/2\Z$ is amenable, then so is the action of $G$ on $X$. The converse is not true.
\end{proposition}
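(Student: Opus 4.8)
For the first assertion, the plan is to transport a $G\ltimes\mathcal{P}_f(X)$-invariant mean on $\mathcal{P}_f(X)$ to a $G$-invariant mean on $X$ through an equivariant averaging map. Suppose $\mu$ is a $(G\ltimes\mathcal{P}_f(X))$-invariant mean on $\ell_\infty(\mathcal{P}_f(X))$; in particular $\mu$ is invariant under the subgroup $G=G\times\{\emptyset\}$ and under the translation action of $\mathcal{P}_f(X)$ on itself. I would define $T:\ell_\infty(X)\to\ell_\infty(\mathcal{P}_f(X))$ by $T(\psi)(F)=\frac{1}{|F|}\sum_{x\in F}\psi(x)$ for $F\neq\emptyset$ and $T(\psi)(\emptyset)=0$. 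Then $T$ is linear, positive, and $\|T\|\le 1$, and the substitution $y=g^{-1}x$ shows $T(g.\psi)=g.T(\psi)$, where $G$ acts on $\ell_\infty(X)$ and on $\ell_\infty(\mathcal{P}_f(X))$ by precomposition with $g^{-1}$.

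I would then set $\nu(\psi)=\mu(T(\psi))$. Since $\mathcal{P}_f(X)$ is an infinite group and $\mu$ is invariant under its translation action, $\mu$ annihilates the indicator of any finite subset of $\mathcal{P}_f(X)$, in particular $\mu(\chi_{\{\emptyset\}})=0$; hence $\nu(\one)=\mu(T(\one))=\mu(\one-\chi_{\{\emptyset\}})=1$, so $\nu$ is a mean on $X$. The $G$-equivariance of $T$ together with the $G$-invariance of $\mu$ gives $\nu(g.\psi)=\mu(g.T(\psi))=\mu(T(\psi))=\nu(\psi)$, so $\nu$ is a $G$-invariant mean and the action of $G$ on $X$ is amenable.

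For the second assertion, the point is that $G$ cannot be amenable in such an example, since $G\ltimes\mathcal{P}_f(X)$ would then be amenable (an extension of the abelian group $\mathcal{P}_f(X)$ by the amenable group $G$) and would act amenably on everything. The plan is therefore to take $X=\Gamma$ the Cayley graph of an amenable group of exponential growth whose metric contains a Lipschitz and injective image of the infinite binary tree — such groups exist by Remark \ref{groups_coarsely_containing_tree} — and to set $G=W(X)$. On one hand, since $\Gamma$ is amenable its Cayley graph carries a F\o lner sequence $(F_n)$; for $g\in W(X)$ with $\ww{g}\le R$ one has $|g F_n\,\Delta\,F_n|\le 2|\partial_R F_n|$, so the uniform averages $\chi_{F_n}/|F_n|$ are asymptotically $W(X)$-invariant and any weak-$*$ cluster point is a $W(X)$-invariant mean on $X$; thus the action of $G=W(X)$ on $X$ is amenable. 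On the other hand, since $X$ contains a Lipschitz injective copy of the binary tree, Theorem \ref{thm=summary_of_paper} (proved as Proposition \ref{X_contains_tree}) shows that the action of $W(X)\ltimes\mathcal{P}_f(X)$ on $\mathcal{P}_f(X)$ is not amenable, yielding the required counterexample.

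The first assertion is essentially formal once the averaging map is in place; the only delicate point is that $\mu$ gives no mass to $\{\emptyset\}$, which is why the translation-invariance of $\mu$ under $\mathcal{P}_f(X)$ (and not merely its $G$-invariance) is used. The genuine content lies in the second assertion, and the main obstacle is the existence of a single metrically amenable space of exponential growth that nonetheless contains a Lipschitz injective binary tree; this is exactly what the tree-embedding examples of Section \ref{section_on_wobbling_group} provide, and it is the reason the second assertion depends on those later results rather than on the argument for the first.
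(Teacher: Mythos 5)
Your proof is correct and follows essentially the same route as the paper: an equivariant averaging map $T$ from $\ell_\infty(X)$ to bounded functions on finite subsets for the first assertion, and $G=W(\Gamma)$ for an amenable group $\Gamma$ whose Cayley graph contains a Lipschitz injective binary tree (combined with Proposition \ref{X_contains_tree}) for the second. The only differences are cosmetic: you dispose of the empty set by noting that invariance of $\mu$ under the translation action of the infinite group $\mathcal{P}_f(X)$ forces $\mu(\chi_{\{\emptyset\}})=0$, where the paper instead invokes Lemma 3.1 of \cite{JM} to get a mean supported on nonempty sets, and you obtain the $W(X)$-invariant mean on $X$ from a F\o lner sequence rather than by checking that a $\Gamma$-invariant mean is automatically $W(X)$-invariant via the piecewise-translation structure of wobblings.
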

\begin{proof}
Assume that the action of $ \bigoplus_X \Z/2\Z \rtimes G$ on $\bigoplus_X \Z/2\Z$ amenable. By (ii) implies (iv) in \cite[Lemma 3.1]{JM}, the set $\mathcal P_f^*$ of non-empty finite subsets of $X$ carries a $G$-invariant mean $m$. Consider the unital positive $G$-equivariant map $T:\ell_\infty(X) \to \ell_\infty(\mathcal P_f^*)$ given by $Tf(A)$ is the average of $f$ on $A$, for all $A$ nonempty finite subset of $X$.  The composition $m \circ T$ is a $G$-invariant mean on $X$.

To see that the converse is not true, take for $X$ the Cayley graph of a finitely generated amenable group $\Gamma$ that contains an infinite binary tree (see Remark \ref{groups_coarsely_containing_tree} for the existence of such group). By Theorem \ref{thm=summary_of_paper}, the action of $ \bigoplus_X \Z/2\Z \rtimes W(X)$ on $\bigoplus_X \Z/2\Z$ is not amenable. On the other hand the action of $W(X)$ on $X$ is amenable; more precisely any $\Gamma$-invariant mean $m$ on $X$ is also $W(X)$-invariant. Indeed, for any $g \in W(X)$ there is a finite partition $A_1,\dots,A_n$ of $X$ and elements $\gamma_1,\dots,\gamma_n$ such that $g$ acts as the translation by $\gamma_k$ on $A_k$. Then for every $f \in \ell_\infty(X)$, using that $(\gamma_i(A_i))_{i=1}^n$ forms a partition of $X$ we get
\[m(g \cdot f) = \sum_i m(\gamma_i \cdot (f 1_{\gamma_i(A_i)}) = \sum_i m(f 1_{A_i}) = m(f).\]
\end{proof}

When $X$ is the Cayley graph of a finitely generated group, the first assertion in Proposition \ref{NCQ1} implies
\begin{lemma}\label{no_mean_when_nonamenable}
Let $\Gamma$ be a finitely generated group. If there exists a $\bigoplus_\Gamma \Z/2\Z \rtimes W(\Gamma)$-invariant mean on $\bigoplus_\Gamma \Z/2\Z$ then $\Gamma$ is amenable.
\end{lemma}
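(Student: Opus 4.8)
The plan is to read this off from Proposition \ref{NCQ1} together with the observation that $\Gamma$ sits inside $W(\Gamma)$ as a group of translations. This is really a corollary, so the work is in setting up the two maps correctly.

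First I would apply Proposition \ref{NCQ1} in the case $G=W(\Gamma)$ and $X=\Gamma$ (viewed as its own Cayley graph with the left-invariant word metric $d(x,y)=|x^{-1}y|_S$). The hypothesis of the lemma is exactly that the action of $W(\Gamma)\ltimes \mathcal{P}_f(\Gamma)$ on $\mathcal{P}_f(\Gamma)$ is amenable, so the first assertion of Proposition \ref{NCQ1} produces a $W(\Gamma)$-invariant mean $m$ on $\Gamma$.

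Next I would exhibit $\Gamma$ as a subgroup of $W(\Gamma)$. The natural candidate is the family of right translations $R_\gamma\colon x\mapsto x\gamma$: these are bijections of $\Gamma$, and the point to check is that they have bounded displacement. Indeed, by left-invariance of the word metric,
\[ d(x,R_\gamma(x)) = |x^{-1}(x\gamma)|_S = |\gamma|_S, \]
which is finite and independent of $x$, so $|R_\gamma|_w = |\gamma|_S<\infty$ and $R_\gamma \in W(\Gamma)$. The assignment $\gamma\mapsto R_\gamma$ is injective and identifies $\Gamma$ with a subgroup of $W(\Gamma)$ (up to the usual anti-isomorphism, which is irrelevant for the mean). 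The one place to be careful is precisely this choice of side: left translations do \emph{not} have bounded displacement for the left-invariant metric, whereas right translations do, so the computation above is what makes the embedding work.

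Finally, since $m$ is invariant under all of $W(\Gamma)$, it is in particular invariant under the subgroup $\{R_\gamma : \gamma\in\Gamma\}$. Invariance of a mean on $\Gamma$ under all right translations is exactly the statement that $\Gamma$ carries a (right-)invariant mean, i.e.\ that $\Gamma$ is amenable. I do not expect a genuine obstacle here beyond correctly matching the translation action to the definition of amenability; the content of the lemma has already been carried by Proposition \ref{NCQ1}.
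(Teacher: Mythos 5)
Your proposal is correct and follows exactly the route the paper intends: the paper derives this lemma directly from the first assertion of Proposition \ref{NCQ1} together with the standard embedding of $\Gamma$ into $W(\Gamma)$ as (right) translations, which is precisely what you spell out. Your care about using right translations (bounded displacement for the left-invariant word metric) is the correct detail, and the rest is the standard equivalence between existence of a right-invariant mean and amenability.
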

We can also give a negative answer to Question \ref{main_question} for some amenable groups. One ingredient for this is the following monotonicity property.
\begin{lemma} \label{monotonicity} Let $i:X \to Y$ an injective map such that $\sup_{d(x,x')\leq R} d(i(x),i(x'))<\infty$ for every $R>0$. If Question
\ref{main_question} has a positive answer for $Y$, then is also has
positive answer for $X$.
\end{lemma}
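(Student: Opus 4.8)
The plan is to transport an invariant mean from $Y$ to $X$. By hypothesis there is a $W(Y)\ltimes\mathcal{P}_f(Y)$-invariant mean $m$ on $\ell_\infty(\mathcal{P}_f(Y))$, and I will produce from it a $W(X)\ltimes\mathcal{P}_f(X)$-invariant mean on $\ell_\infty(\mathcal{P}_f(X))$. The mechanism is abstract: if $\Psi\colon H\to H'$ is a group homomorphism, if $H'$ acts on a set $Z'$ with an invariant mean, and if $\alpha\colon Z'\to Z$ is equivariant for the $H$-action on $Z'$ obtained through $\Psi$, then $f\mapsto m(f\circ\alpha)$ is an $H$-invariant mean on $Z$, because $f\mapsto f\circ\alpha$ is a unital positive $H$-equivariant map $\ell_\infty(Z)\to\ell_\infty(Z')$. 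So it suffices to build (a) a homomorphism $\Psi\colon W(X)\ltimes\mathcal{P}_f(X)\to W(Y)\ltimes\mathcal{P}_f(Y)$ and (b) an equivariant ``retraction'' $\alpha\colon\mathcal{P}_f(Y)\to\mathcal{P}_f(X)$.

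For (a), since $i$ is injective the map $F\mapsto i(F)$ is an injective homomorphism $i_*\colon\mathcal{P}_f(X)\to\mathcal{P}_f(Y)$ for the symmetric-difference structure, because an injective map commutes with all Boolean operations. For $g\in W(X)$ I define $\Phi(g)$ on $Y$ by $\Phi(g)(i(x))=i(g(x))$ on $i(X)$ and $\Phi(g)=\id$ on $Y\setminus i(X)$; this is a bijection of $Y$, being the conjugate of $g$ by $i$ on $i(X)$ and the identity elsewhere, and $g\mapsto\Phi(g)$ is visibly a homomorphism. This is the only place the metric hypothesis enters: writing $R=\ww{g}$, the displacement of $\Phi(g)$ at $i(x)$ is $d_Y(i(g(x)),i(x))$, and since $d_X(x,g(x))\le R$ this is at most $\sup_{d_X(u,u')\le R}d_Y(i(u),i(u'))<\infty$, so $\Phi(g)\in W(Y)$. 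The two homomorphisms are compatible with the semidirect-product structure, since $\Phi(g)(i(E))=i(g(E))$, hence they assemble into $\Psi(g,E)=(\Phi(g),i(E))$.

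For (b), I set $\alpha(w)=i^{-1}(w)=\{x\in X:\ i(x)\in w\}$, which is finite because $i$ is injective, so $\alpha$ does map $\mathcal{P}_f(Y)$ into $\mathcal{P}_f(X)$. Equivariance is then checked on the two families of generators: for $g\in W(X)$ one has $i^{-1}(\Phi(g)w)=g(i^{-1}(w))$, using $\Phi(g)^{-1}(i(x))=i(g^{-1}(x))$; and for $E\in\mathcal{P}_f(X)$ one has $i^{-1}(w\,\Delta\,i(E))=i^{-1}(w)\,\Delta\,E$, using that $i^{-1}$ commutes with $\Delta$ and that $i^{-1}(i(E))=E$. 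Together these give $\alpha(\Psi(h)\cdot w)=h\cdot\alpha(w)$ for every $h\in W(X)\ltimes\mathcal{P}_f(X)$, so $f\mapsto m(f\circ\alpha)$ is the desired invariant mean on $\mathcal{P}_f(X)$, i.e. Question \ref{main_question} has a positive answer for $X$.

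\emph{The main obstacle.} There is no deep difficulty here; the one genuinely substantive point is the construction of $\Phi$ together with the verification that $\Phi(g)$ still has bounded displacement, which is precisely where the bornologous hypothesis on $i$ is used. The rest is bookkeeping, the only subtlety being to keep straight that the group homomorphism $\Psi$ points from $X$ to $Y$ while the equivariant map of sets $\alpha$ points from $Y$ to $X$, so that the mean pulls back in the correct direction.
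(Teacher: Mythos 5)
Your proof is correct and follows essentially the same route as the paper: embed $W(X)$ into $W(Y)$ by conjugating with $i$ (extended by the identity off $i(X)$), and push the invariant mean forward along $A \mapsto i^{-1}(A)$. The only cosmetic difference is that you verify invariance under the $\mathcal{P}_f(X)$-part of the semidirect product directly, whereas the paper invokes Lemma 3.1 of \cite{JM} twice to replace that condition by the equivalent requirement that the mean give full weight to the sets containing a fixed base point.
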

\begin{proof}
In this proof we denote by $\mathcal P_f(X)$ the set of all finite subsets of $X$, which carries a natural action of $W(X)$. It follows from the equivalence of (ii) and (iv) in \cite[Lemma 3.1]{JM} that Question \ref{main_question} has a positive answer if and only if there is a $W(X)$-invariant mean on $\mathcal P_f(X)$ giving full weight to the subsets containing any given element of $X$.

The map $i$ allows to define an embedding $W(X) \subset W(Y)$ by
defining, for $g \in W(X)$, $g\cdot i(x) = i(g\cdot x)$ and $g \cdot y
= y$ if $y \notin i(X)$.

Assume that Question \ref{main_question} has a positive answer for $Y$, and take $x_0 \in X$. By \cite[Lemma 3.1]{JM} there is a mean $m$ on $\mathcal P_f(Y)$ that is $W(Y)$-invariant and that gives full weight to the collection
of sets containing $i(x_0)$. Then the push-forward mean on $\mathcal P_f(X)$ (given by $\varphi \in \ell_\infty(\mathcal P_f(X)) \mapsto m(A \mapsto \varphi(i^{-1}(A))$) is $W(X)$-invariant and gives full weight to the collection
of sets containing $x_0$. By \cite[Lemma 3.1]{JM} again, Question \ref{main_question} has a positive answer for $X$.
\end{proof}

Lemma \ref{monotonicity} and Proposition \ref{NCQ1} imply that for $\bigoplus_X \Z/2\Z$ to act amenably on $\bigoplus_X \Z/2\Z$ it is necessary that $W(X')$ act amenably on $X'$ for all $X' \subset X$. In particular the following Proposition establishes the second half of Theorem \ref{thm=summary_of_paper}.
\begin{proposition} \label{X_contains_tree}
Let $(X,d)$ be a metric space with bounded geometry with an injective and Lipschitz map from the infinite binary tree $T$ to $X$. Then there is no $\bigoplus_X \Z/2\Z \rtimes W(X)$-invariant mean on $\bigoplus_X \Z/2\Z$.
\end{proposition}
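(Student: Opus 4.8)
The plan is to strip away the ambient space $X$ using the monotonicity already at our disposal, reducing everything to the tree $T$, and then to settle the statement for $T$ by a non-amenability argument.

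First, the given injective Lipschitz map $\iota\colon T\to X$ satisfies $\sup_{d(x,x')\le R} d(\iota(x),\iota(x'))\le LR<\infty$ for every $R>0$, where $L$ is the Lipschitz constant; hence it fulfils the hypotheses of Lemma \ref{monotonicity}. Consequently a positive answer to Question \ref{main_question} for $X$ would force one for $T$, so it suffices to show that there is no $W(T)\ltimes\mathcal P_f(T)$-invariant mean on $\mathcal P_f(T)$. I would then invoke the first assertion of Proposition \ref{NCQ1} with $G=W(T)$: an invariant mean on $\mathcal P_f(T)$ would yield a $W(T)$-invariant mean on $T$. Thus the whole statement reduces to the single geometric fact that \emph{the action of $W(T)$ on $T$ admits no invariant mean}.

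To prove this last fact I would use that the binary tree is \emph{non-amenable} as a bounded-degree graph: its vertex boundary is comparable to its volume, so it fails the F\o lner condition. The cleanest route is to realize a non-amenable group inside $W(T)$. Indeed $T$ is non-amenable and quasi-isometric to the $4$-regular tree $T_4$ --- the Cayley graph of the free group $F_2$ --- hence, by Whyte's theorem, bilipschitz equivalent to it \emph{via a bijection} $\beta\colon F_2\to T$. Transporting the left-regular action along $\beta$, namely $\gamma\cdot t=\beta(\gamma\,\beta^{-1}(t))$, embeds $F_2$ into $W(T)$: each generator displaces points of $F_2$ by $1$, so after conjugation by the bilipschitz map $\beta$ the displacement on $T$ stays bounded, whence every group element lies in $W(T)$. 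A $W(T)$-invariant mean on $T$ would restrict to an $F_2$-invariant mean for this action, and pulling back through $\beta$ would give a left-invariant mean on $F_2$ --- impossible since $F_2$ is non-amenable.

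The step I expect to be the main obstacle is exactly this realization: because $T$ is not itself a Cayley graph, the von Neumann paradox cannot be written down directly on $T$, and one must transport it along a \emph{bijective} bilipschitz identification while verifying that all displacements stay uniformly bounded. An alternative that avoids Whyte's theorem is to appeal to the Tarski-type characterization of amenability for bounded-geometry metric spaces (\cite{deuber}, \cite{CGH}): such a space carries a $W$-invariant mean if and only if it is amenable (F\o lner), and the binary tree is not. This yields the same conclusion, the only care being to assemble the bounded-displacement paradoxical pieces into honest elements of $W(T)$ so that invariance of the mean can be applied.
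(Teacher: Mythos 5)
Your argument is correct, but it takes a genuinely different final step from the paper. You reduce to the tree $T$ via Lemma \ref{monotonicity} and then must show that the action of $W(T)$ on $T$ itself admits no invariant mean, which forces you to identify $T$ \emph{bijectively} with the free group (via Whyte's theorem, or to invoke the Tarski--F\o lner characterization of invariant means for bounded-geometry spaces from \cite{deuber}, \cite{CGH}). The paper sidesteps this entirely by performing the embedding in the other order: it first fixes a Lipschitz \emph{injective} (not bijective) map from the Cayley graph of $F_2$ into $T$, composes it with the given map $T\to X$, and applies Lemma \ref{monotonicity} to the resulting map $F_2\to X$. This reduces the statement to the case $X=F_2$, where Lemma \ref{no_mean_when_nonamenable} applies immediately because $F_2$ sits inside $W(F_2)$ by left translations. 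The gain is that no bijective bilipschitz identification is ever needed --- an injective Lipschitz copy of $F_2$ in the binary tree is elementary to produce --- so the heavy external inputs (Whyte's theorem or the metric F\o lner criterion) that you correctly flag as the main obstacle simply do not arise. Your route is valid and the alternative via \cite{CGH} is a legitimate self-contained fix, but the paper's ordering of the two embeddings is the more economical one.
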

\begin{proof}
There is a Lipschitz injective map from the free group with two generators in $T$, and hence in $X$ if $X$ contains an injective and Lipschitz image of $T$. The Proposition therefore follows from Lemma
\ref{no_mean_when_nonamenable} and Lemma \ref{monotonicity}.
\end{proof}
\begin{remark}\label{groups_coarsely_containing_tree} The class of groups for which this proposition applies, \emph{i.e.} for which there is a Cayley graph that contains a copy of the infinite binary tree as a subgraph, contains in particular all non-amenable groups (\cite[Theorem 1.5]{Benjamini-Schramm}), as well as all elementary amenable groups with exponential growth (by \cite{C} such groups contain a free subsemigroup). In \cite{Grig}, R. Grigorchuk, disproving a conjecture of Rosenblatt, proved that the lamplighter group $Z_2 \wr G$ contains an infinite binary tree, here $G$ is Grigorchuk's $2$-group of intermediate growth. We do not know whether all groups with exponential growth contain such a tree.
\end{remark}

\section{Property (T) subgroups}
It is an interesting question to extract properties of the group $W(X)$ using the properties of the underlying metric space. Below we prove that $W(X)$ cannot contain property $(T)$ groups when $X$ is of subexponential growth. Alain Valette (personal communication) pointed out to us that a very similar observation (atttributed to Kazhdan) was made by Gromov in \cite{gromov} Remark 0.5.F: a discrete property (T) group $G$ cannot contain a subgroup $G'$ such that $G/G'$ has subexponential growth unless $G/G'$ is finite.

\begin{theorem}\label{thm=WX_contains_no_T_subgroup}
Let $X$ be a metric space with uniform subexponential growth~:
\[\lim_{n} \frac 1 n \log \sup_{x \in X} |B(x,n)| =0.\] Then $W(X)$ does not contain
an infinite countable property $(T)$ group.
\end{theorem}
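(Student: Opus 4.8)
The plan is to assume for contradiction that $G\le W(X)$ is infinite and has property~(T), and to use the permutation action of $G$ on $X$ together with the Kazhdan spectral gap to bound the cardinality of every $G$-orbit by a constant depending only on the growth function of $X$. Since property~(T) forces $G$ to be finitely generated, I first fix a finite symmetric generating set $S\subset W(X)$ and a Kazhdan constant $\kappa>0$, so that $\sum_{s\in S}\|\pi(s)\eta-\eta\|^2\ge \kappa^2\|\eta\|^2$ for every unitary representation $\pi$ of $G$ with no nonzero invariant vector and every $\eta$ in its space. Let $D=\max_{s\in S}\sup_{x\in X}d(x,sx)<\infty$ be the maximal displacement of the generators, and write $b(n)=\sup_{x\in X}|B(x,n)|$, which satisfies $\tfrac1n\log b(n)\to 0$ by hypothesis.

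The heart of the argument is a growth estimate for a single orbit. Fix an orbit $O=Gx$ with $x\in O$ and set $V(n)=|O\cap B(x,n)|$. The permutation representation $\pi$ of $G$ on $\ell_2(O)$ has no nonzero invariant vector on the mean-zero subspace $\mathcal H^0$ (reading $|O|=\infty$ when $O$ is infinite, in which case $\mathcal H^0=\ell_2(O)$). Apply the spectral gap to $\eta_n=\xi_n-P\xi_n$, where $\xi_n=\mathbf{1}_{O\cap B(x,n)}$ and $P$ is the orthogonal projection onto the constants; then $\|\eta_n\|^2=V(n)\bigl(1-V(n)/|O|\bigr)$. Because $P$ is $G$-equivariant, $\pi(s)\eta_n-\eta_n=\pi(s)\xi_n-\xi_n$, and since $s$ preserves $O$ and moves points by at most $D$ one has $B(x,n-D)\subseteq sB(x,n)\subseteq B(x,n+D)$, so this difference is supported in the annulus of width $2D$. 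Hence $\sum_{s\in S}\|\pi(s)\eta_n-\eta_n\|^2\le |S|\bigl(V(n+D)-V(n-D)\bigr)$. Combining with the spectral gap gives, in the range where $V(n)\le |O|/2$ (so that $\|\eta_n\|^2\ge V(n)/2$),
\[ V(n+D)-V(n-D)\ \ge\ \frac{\kappa^2}{2|S|}\,V(n), \]
which forces exponential growth $V(n)\ge c'\rho^{\,n}$ with $\rho=\bigl(1+\tfrac{\kappa^2}{2|S|}\bigr)^{1/2D}>1$ and $c'>0$ depending only on $\kappa,|S|,D$, valid for every $n$ with $V(n)\le|O|/2$ (and for all $n$ when $O$ is infinite).

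Now I invoke \emph{uniform} subexponential growth to cap the radius: choose $N_0$ with $b(n)<c'\rho^{\,n}$ for all $n\ge N_0$, which is possible since $\rho>1$ while $\log b(n)=o(n)$. If $O$ were infinite we would get $V(N_0)\ge c'\rho^{N_0}>b(N_0)\ge V(N_0)$, absurd; so all orbits are finite, and the same inequality shows $V(N_0)>|O|/2$, whence $|O|<2b(N_0)=:M$. Crucially $M$ is independent of the orbit — this is exactly where the uniformity of the growth bound is used, since different orbits require different base points. To conclude, let $N_\alpha$ be the kernel of the action of $G$ on the orbit $Y_\alpha$; then $[G:N_\alpha]\le M!$, and faithfulness of the $W(X)$-action gives $\bigcap_\alpha N_\alpha=\{e\}$. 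A finitely generated group has only finitely many subgroups of index at most $M!$, so only finitely many distinct $N_\alpha$ occur; their intersection is therefore a finite-index subgroup equal to $\{e\}$, forcing $G$ to be finite and contradicting our assumption. I expect the main obstacle to be the estimate of the previous paragraph: one must handle the truncation factor $\bigl(1-V(n)/|O|\bigr)$ for finite orbits, bookkeep the displacement correctly (edges of width $2D$), and, most importantly, arrange the exponential-growth cap to be uniform across all orbits simultaneously; the reduction to finite generation and the final counting of finite-index subgroups are routine by comparison.
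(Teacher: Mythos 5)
Your argument is correct and follows essentially the same route as the paper's: finite generation from property (T), a Kazhdan spectral-gap estimate applied to (essentially the same, up to scaling) mean-zero cut-off vectors in $\ell_2$ of an orbit to force exponential growth of balls within each orbit, and uniform subexponential growth to obtain a uniform bound on orbit sizes. The only differences are cosmetic: you measure growth with the metric balls $O\cap B(x,n)$ where the paper uses the sets $S^n x$, and you spell out the final deduction (kernels of the orbit actions, and the fact that a finitely generated group has only finitely many subgroups of index at most $M!$) that the paper leaves implicit.
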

\begin{proof}
Assume $G<W(X)$ is a finitely generated property $(T)$ group, with finite symmetric generating set $S$. We will prove that $G$ is finite. To do so we prove that the $G$-orbits on $X$ are finite, with a uniform bound. Assume that $1 \in S$. If $m=max\{|g|_w:g\in S\}$, then $S^n x \subset B(x,mn)$ for every $x \in X$, so that by assumption, the growth of $S^n x$ is subexponential (uniformly in $x \in X$). The classical expanding properties for actions of $(T)$ groups will imply that the orbit of $x$ is finite (uniformly in $x$).

Indeed, by $(T)$, there exists $\varepsilon>0$ such that for every unitary action of $G$ on a Hilbert space $H$ without invariant vectors, the inequality $\sum_{g \in S} \|g \cdot \xi - \xi\|^2 \geq \varepsilon \|\xi\|^2$ holds for every $\xi \in H$. As a consequence, for every transitive action of $G$ on a set $Y$, we have $\sum_{g \in S} |g F \Delta F| \geq \varepsilon/2 |F|$ for every finite subset $F$ of $Y$ satisfying $2 |F| \leq |Y|$ (take $H=\ell_2(Y)$ if $Y$ is infinite, and $H=$the subspace of $\ell_2(Y)$ orthogonal to the vector with all coordinates equal otherwise, and apply the preceding equality with $\xi = \chi_F - |F|/|Y \setminus F| \chi_{Y \setminus F}$. Here $\chi_F$ is the indicator function of $F$, and $|F|/|Y \setminus F|$ is by convention $0$ if $Y$ is infinite). By induction, we therefore have that for $x\in Y$ and $n \in \N$, $|S^n x| \geq (1+\varepsilon/4)^n$ unless $|Y| \leq 2(1+\varepsilon/4)^n$. Applying it to the orbit of some $x \in X$, we get
\[
|S^n x| < (1+\varepsilon/4)^n \Longrightarrow |Orb_G(x)| < 2(1+\varepsilon/4)^n.
\]
Hence, subexponential growth gives an $n \in \N$ such that $|Orb_G(x)| < 2(1+\varepsilon/4)^n$ for every $x \in X$. QED.
\end{proof}

To construct spaces such that $W(X)$ contains property (T) groups, we first remark that the groups $W(X)$ behave well with respect to coarse embeddings. A map $q:(X,d_X) \to (Y,d_Y)$ between metric spaces is {\it a coarse embedding} if there exists nondecreasing functions $\varphi_+,\varphi_-:[0,\infty[ \to \R$ such that $\lim_{t \to \infty} \varphi_-(t) = \infty$ and
$$\varphi_-(d_X(x,x')) \leq d_Y(q(x),q(x')) \leq \varphi_+(d_X(x,x'))$$ 
for every $x,x' \in X$.
\begin{lemma}\label{lemma=WX_contained_in_WY}
Let $q:(X,d_X) \to (Y,d_Y)$ be a map such that there is an increasing function
$\varphi_+:\R^+ \to \R^+$ such that $d_Y(q x, q y) \leq
\varphi_+(d_X(x,y))$, and such that the preimage $q^{-1}(y)$ of every $y
\in Y$ has cardinality less than some constant $K$ (e.g. $q$ is a
coarse embedding and $X$ has bounded geometry). Let $F$ be a finite
metric space of cardinality $K$. Then $W(X)$ is isomorphic to a
subgroup of $W(Y \times F)$.
\end{lemma}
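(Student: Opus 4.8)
The plan is to construct an explicit injective group homomorphism $W(X) \hookrightarrow W(Y \times F)$ by using the fibers of $q$. The key observation is that although $q$ need not be injective, each fiber $q^{-1}(y)$ has at most $K = |F|$ elements, so I can choose, for every $y \in Y$, an injection $\iota_y : q^{-1}(y) \to F$. Packaging these together gives an injective map $\Phi : X \to Y \times F$ defined by $\Phi(x) = (q(x), \iota_{q(x)}(x))$; injectivity is immediate since $q(x)$ recovers the fiber and $\iota_{q(x)}$ is injective on it. The point of enlarging the target by the factor $F$ is precisely to separate the points of $X$ that $q$ collapses.

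The main step is then to verify that $\Phi$ satisfies the hypotheses of Lemma \ref{monotonicity}, that is, that $\Phi$ is injective and that $\sup_{d_X(x,x') \leq R} d_{Y \times F}(\Phi(x), \Phi(x')) < \infty$ for every $R > 0$. Injectivity is handled above. For the boundedness of displacement, I would equip $Y \times F$ with a product-type metric (say the sum or max of $d_Y$ and $d_F$); since $F$ is finite its diameter is bounded by some constant $D$, and if $d_X(x,x') \leq R$ then $d_Y(q(x), q(x')) \leq \varphi_+(R)$ by hypothesis, so $d_{Y\times F}(\Phi(x),\Phi(x')) \leq \varphi_+(R) + D < \infty$. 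Thus $\Phi$ is exactly the kind of map to which the embedding construction applies.

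With $\Phi$ in hand, I would invoke the embedding of wobbling groups induced by an injective displacement-bounded map, exactly as used in the proof of Lemma \ref{monotonicity}: for $g \in W(X)$, define $\tilde g \in W(Y \times F)$ by $\tilde g(\Phi(x)) = \Phi(g \cdot x)$ on the image $\Phi(X)$ and $\tilde g(z) = z$ for $z \notin \Phi(X)$. One checks that $\tilde g$ is a bijection of $Y \times F$ (it is a bijection on $\Phi(X)$ because $g$ is and $\Phi$ is injective, and the identity elsewhere) and that it has bounded displacement: for $z = \Phi(x) \in \Phi(X)$, $d_{Y\times F}(\tilde g(z), z) = d_{Y\times F}(\Phi(gx), \Phi(x)) \leq \varphi_+(|g|_w) + D$, using that $d_X(gx,x) \leq |g|_w$, while off $\Phi(X)$ the displacement is zero. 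Hence $\tilde g \in W(Y \times F)$. That $g \mapsto \tilde g$ is a homomorphism follows from $\Phi(gh \cdot x) = \tilde g(\Phi(h \cdot x))$ together with both maps acting as the identity off $\Phi(X)$, and it is injective because $\tilde g = \id$ forces $g x = x$ for all $x$.

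The part requiring the most care, rather than genuine difficulty, is the displacement bound for $\tilde g$: one must track that a uniform bound on $d_X(x, g x)$ translates, through $\varphi_+$ and the finite diameter of $F$, into a uniform bound on $d_{Y \times F}(\Phi(x), \Phi(gx))$, so that the supremum defining $|\tilde g|_w$ is indeed finite. This is where the monotonicity of $\varphi_+$ and the finiteness of $F$ are both used. The remaining obstacle to be explicit about is the choice of the fiberwise injections $\iota_y$, which exists simply because $|q^{-1}(y)| \leq K = |F|$; no coherence between different fibers is needed, so no appeal beyond a pointwise choice is required. I would conclude by remarking that when $q$ is a coarse embedding and $X$ has bounded geometry the fiber bound $|q^{-1}(y)| \leq K$ is automatic, since the lower control $\varphi_-$ forces the preimages of a point to have uniformly bounded size.
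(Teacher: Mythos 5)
Your proposal is correct and follows essentially the same route as the paper: the paper likewise uses the fiber bound to choose a map $f:X\to F$ making $\widetilde q(x)=(q(x),f(x))$ injective, transports the $W(X)$-action through $\widetilde q$ (identity off the image), and uses $\varphi_+$ together with the finiteness of $F$ to check the wobbling condition. Your write-up just makes the displacement estimate and the homomorphism/injectivity checks explicit.
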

\begin{proof}
In this statement $Y \times F$ is equipped with the distance $d(
(y,f),(y',f')) = d_Y(y,y')+d_F(f,f')$. Since $F$ is bigger than
$q^{-1}(y)$ for all $y$, there is a map $f:X \to F$ such that the map
$\widetilde q: x \in X \mapsto (q(x),f(x))\in Y \times F$ is
injective. We can therefore define an action of $W(X)$ on $Y \times F$
by setting $g(\widetilde q(x))=\widetilde q(g x)$ and
$g (y,f)=(y,f)$ if $(y,f) \notin \widetilde q(X)$. The
assumption on $\varphi_+$ guarantees that this action is by wobblings,
ie that it defines an embedding of $W(X)$ in $W(Y \times F)$.
\end{proof}

In a contrast to Theorem \ref{thm=WX_contains_no_T_subgroup} we have the following result by Romain Tessera. With his kind permission we include a proof.
\begin{theorem} \label{Rom}
There is a solvable group $\Gamma$ such that $W(\Gamma)$ contains the
property $(T)$ group $SL(3,\Z)$.
\end{theorem}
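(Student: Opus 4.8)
The plan is to place $SL(3,\Z)$ inside $W(\Gamma)$ in two steps: first realise $SL(3,\Z)$ inside its own wobbling group, and then push this copy into the wobbling group of a solvable group by means of the monotonicity of $W$ under coarse embeddings recorded in Lemma \ref{lemma=WX_contained_in_WY}.

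First I would treat the elementary step. Equip $\Lambda:=SL(3,\Z)$ with the word metric $d(a,b)=|a^{-1}b|$ attached to a finite generating set, which is left invariant and turns $\Lambda$ into a metric space of bounded geometry. For $g\in\Lambda$ the right translation $R_g\colon x\mapsto xg^{-1}$ is a bijection with $d(x,R_g(x))=d(x,xg^{-1})$ equal to the word length of $g$, independently of $x$; hence $\ww{R_g}<\infty$, each $R_g$ is a wobbling, and $g\mapsto R_g$ is an injective homomorphism $\Lambda\hookrightarrow W(\Lambda)$.

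The substantial step is to exhibit a coarse embedding of $\Lambda$ into a solvable group. I would use the Iwasawa decomposition $SL(3,\R)=KS$, where $K$ is maximal compact and $S=AN$ is the solvable product of the maximal split torus by the unipotent radical, both groups being equipped with left-invariant Riemannian metrics. Since $K$ is compact, $S$ is cocompact in $SL(3,\R)$, so the inclusion $S\hookrightarrow SL(3,\R)$ is a quasi-isometry. On the other hand $SL(3,\R)$ has real rank $2$, so by the theorem of Lubotzky--Mozes--Raghunathan the non-uniform lattice $\Lambda$ is undistorted: its word metric is quasi-isometric to the metric induced from $SL(3,\R)$. Composing these two facts (the second with a quasi-inverse of the inclusion) produces a quasi-isometric, in particular coarse, embedding $q\colon\Lambda\to S$ into the solvable group $S$.

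Finally I would combine the two steps. Because $\Lambda$ has bounded geometry and $q$ is a coarse embedding, every fibre $q^{-1}(y)$ has cardinality at most some $K$, so Lemma \ref{lemma=WX_contained_in_WY} applies and gives $W(\Lambda)\hookrightarrow W(S\times F)$ for a finite set $F$ of cardinality $K$. Choosing $F$ to be a finite abelian group makes $\Gamma:=S\times F$ a solvable group whose left-invariant metric is, up to a bounded error, the product metric used in the lemma, so that $W(\Gamma)$ is the wobbling group appearing there. (Alternatively, as $S$ is uncountable one can perturb $q$ to be injective and take $\Gamma=S$ outright.) Chaining this with the embedding $g\mapsto R_g$ of the second paragraph yields $SL(3,\Z)\hookrightarrow W(\Lambda)\hookrightarrow W(\Gamma)$. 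The only serious point in the argument is the coarse embedding into a solvable group: it rests entirely on the higher-rank undistortion of $SL(3,\Z)$ in $SL(3,\R)$ from Lubotzky--Mozes--Raghunathan, which is the heart of Tessera's example, whereas the verification that right translations are wobblings, the cocompactness of $S$, and the bookkeeping of Lemma \ref{lemma=WX_contained_in_WY} are all routine.
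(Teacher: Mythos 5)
Your first and last steps (right translations give $SL(3,\Z)\hookrightarrow W(SL(3,\Z))$, and Lemma \ref{lemma=WX_contained_in_WY} transports this along a coarse embedding) match the paper. The gap is in the ``substantial step'': your coarse embedding lands in $S=AN$, a \emph{connected} solvable Lie group, whereas the theorem needs a solvable group $\Gamma$ to which the paper's wobbling framework applies, i.e.\ a finitely generated discrete group with its word metric (equivalently, a bounded-geometry space -- recall that bounded geometry here means balls of bounded \emph{cardinality}, which $S$ with a left-invariant Riemannian metric violates). Your two suggested fixes both fail. Taking $\Gamma=S$ outright gives a space without bounded geometry, outside the scope of $W(\cdot)$ as used in the paper. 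Replacing $S$ by a discrete cocompact subgroup is impossible: the Borel-type subgroup $AN$ of $SL(3,\R)$ is non-unimodular (its modular function is $e^{2\rho}$ with $\rho\neq 0$), and non-unimodular locally compact groups admit no lattices at all. Nor can you substitute a finitely generated solvable group quasi-isometric to $AN$ -- no such group is known to exist, and results in the Eskin--Fisher--Whyte direction suggest there is none. So the Iwasawa/Lubotzky--Mozes--Raghunathan route, while correct as far as it goes (undistortion of $SL(3,\Z)$ in $SL(3,\R)$ plus cocompactness of $AN$ do give a quasi-isometric embedding $SL(3,\Z)\to AN$), does not reach a legitimate target group.

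For comparison, Tessera's argument as presented in the paper sidesteps this entirely: $SL(3,\Z)$ has finite asymptotic dimension (Bell--Dranishnikov), hence embeds coarsely into a finite product of binary trees; a finitely generated solvable group $\Gamma_0$ containing a free subsemigroup coarsely contains a binary tree, so $SL(3,\Z)$ embeds coarsely into the finitely generated solvable group $\Gamma_0^n$; then Lemma \ref{lemma=WX_contained_in_WY} applies with $Y=\Gamma_0^n$ and a finite (e.g.\ cyclic) $F$. The key input you are missing is some mechanism -- here, asymptotic dimension and trees -- for getting from the continuous solvable model $AN$ down to a genuinely finitely generated solvable receptacle.
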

\begin{proof}
The proof uses the notion of asymptotic dimension (see \cite{BD}). By
\cite[Corollary 94]{BD}, $SL(3,\Z)$ has finite asymptotic
dimension. By \cite[Theorem 44]{BD} this implies that $SL(3,\Z)$
embeds coarsely into a finite product of binary trees. Take $\Gamma_0$ a
solvable group with a free semigroup. In particular it coarsely
contains a binary tree, so $SL(3,\Z)$ embeds coarsely in $\Gamma_0^n$
for some $n$. By Lemma \ref{lemma=WX_contained_in_WY}, there is a
finite group $F$ such that $W(SL(3,\Z))$ embeds as a subgroup in $W(F
\times\Gamma_0^n)$. But $W(SL(3,\Z))$ contains $SL(3,\Z)$ (action by
translation).
\end{proof}
\begin{remark} The proof actually shows that for every group $\Lambda$ with
finite asymtotic dimension, there is an integer $n$ such that
$\Lambda$ is isomorphic to a subgroup of $W(\Gamma^n)$ whenever there
is a Cayley graph of $\Gamma$ that contains an infinite binary tree as
a subgraph. By Remark \ref{groups_coarsely_containing_tree} this includes lots of groups $\Gamma$ with exponential growth. In some sense this says that the assumptions of Theorem \ref{thm=WX_contains_no_T_subgroup} are not so restrictive.
\end{remark}

\end{document}